\numberwithin{equation}{section} \textwidth=140mm \textheight=200mm
\newcommand{\bbC}{\mathbb C}
\renewcommand{\epsilon}{\varepsilon}
\newcommand{\be}{\begin{equation}}
\newcommand{\ee}{\end{equation}}
\newcommand{\R}{\mathbb{R}}
\newcommand{\T}{\mathbb{T}}
\newcommand{\Z}{\mathbb{Z}}
\newtheorem{theorem}{Theorem}[section]
\newtheorem{lemma}[theorem]{Lemma}
\newtheorem{corollary}[theorem]{Corollary}
\newtheorem{hypothesis}[theorem]{Hypothesis}
\newtheorem{definition}[theorem]{Definition}
\newtheorem{remark}[theorem]{Remark}
\date{\today}
\begin{document}

\title[Convergent expansions of eigenvalue of the generalized Friedrichs ...]
{Convergent expansions of eigenvalues of the generalized Friedrichs
model with a rank-one perturbation}

\author{Saidakhmat N. Lakaev $^{1}$}
\address{
$^1$ Samarkand State University, Samarkand (Uzbekistan)}
\email{slakaev@mail.ru}
\address{{$^{1}$ Department of Mathematics, Samarkand State University,
Samarkand (Uzbekistan)} \\ {E-mail: slakaev@mail.ru }}

\author{Shakhzod Kurbanov$^{2}$}
\address{$^{2}$ {Department of Mathematics, Samarkand State University,
Samarkand (Uz\-be\-kis\-tan)}\\ {E-mail:kurbanov-shaxzod@mail.ru}}

\maketitle
\begin{abstract}
We study the existence of eigenvalues of the generalized Friedrichs
model $H_\mu(p)$, with a rank-one perturbation, depending on
parameters $\mu>0$ and $p\in\T^2$, and found an absolutely
convergent expansions for eigenvalues at $\mu(p)$, the coupling
constant threshold. The expansions are highly dependent on that,
whether the threshold $m(p)$ of the essential spectrum is: $(i)$
neither an threshold eigenvalue nor a threshold resonance; $(ii)$ a
threshold resonance; $(iii)$ an threshold eigenvalue.
\end{abstract}
\subjclass{2010 Mathematics Subject Classification.  Primary: 81Q10,
Secondary:  47A10}

\textit{Keywords and phrases: generalized Friedrichs model, coupling
constant threshold, hamiltonian,  dispersion relation, threshold
resonance, threshold eigenvalue}

\section{Introduction}
We consider a family of the generalized Friedrichs models
$H_\mu(p),$ $\mu>0,$ $p\in\T^d,\,d\ge1$ with a rank-one
perturbation, which is a generalization of the family of
two-particle Schr\"{o}dinger operators $H_{\mu}(k),$
$k\in\T^d=(-\pi,\pi]^d$ associated to a system of two arbitrary
(identical) quantum mechanical particles moving on the
$d$-dimensional lattice $\Z^d,$ $d\ge1$ and interacting via
zero-range attractive or repulsive potential (see, e.g.,
\cite{GadellaPronko2011}, \cite{LDK2013}, \cite{CivitareseGadella},
\cite{Naboko} and references therein).

The description of Bose condensates is an example, where zero-range
interactions are basic to theories of the condensed aggregates.
Zero-range potentials are the mathematically correct tools for
describing contact interactions. The latter reflects the fact that
the zero-range potential is effective only in the s-wave
\cite{{Macek},{Yakovlev}}.

A large class of the {\it zero-range models} can be constructed
based on John von Neumann operator extension technique. It is
provided an extended insight into the Friedrichs model, as an
universal tool of the analytic perturbation theory and give a
state-of-art review of others fitted zero-range models and the
results can be considered as a motivation and a practical
introduction into the area of applied spectral analysis of linear
dynamical systems \cite{Pavlov}.

Under some assumptions on the operator one can obtain a meromorphic
continuation of the resolvent around a neighborhood of the
threshold, and then give a unified discussion of eigenvalues and
resonances. There are several results of this type in the
literature, see for example
\cite{{GesztesyHolden},{LakaevTilavova1994},{Rauch}}.

The authors of \cite{Holden} and \cite{Klaus-Simon80} studied for
the Schr\"{o}dinger ope\-ra\-tors $H_{\mu}=-\Delta+\mu V$  a
situation, where as $\mu$ approaches to $\mu_0\ge0$ an eigenvalue
$E(\mu)$ accumulates to $0$, the bottom of the essential spectrum of
$H_{\mu}$, i.e., as $\mu$ approaches to $\mu_0$ an eigenvalue is
absorbed at the threshold of continuum, and conversely, as $\mu$
seeks to $\mu_0+\epsilon, \epsilon>0,$ the continuum {\it gives
birth} to a new eigenvalue. This phenomenon in \cite{Klaus-Simon80}
is called {\it coupling constant threshold}.  Moreover, in
\cite{Holden}, \cite{Klaus-Simon80} an absolutely convergent
expansion for the eigenvalue $E(\mu)$ at $\mu_0\ge0$, the coupling
constant threshold of $H_{\mu}$, was found. In \cite{Jensen} some
results on the perturbation of eigenvalues embedded at thresholds in
a two channel model Hamiltonian with a small of-diagonal
perturbation, which are related to results on coupling constant
thresholds. Examples are given of the various types of behavior of
the eigenvalue under perturbation.

Results devoted to coupling constant threshold (c.c.th.) have also
been obtained on perturbations of Schr\"odinger operators with
periodic potentials \cite{{FassariKlaus}}, and for the Dirac
operator \cite{Klaus}.

The main object of the work \cite{FassariKlaus} is to study the
analytic behavior of eigenvalue $E(\lambda)$, when $\lambda$ is near
a c.c.th. $\lambda$. The authors  applied slightly different (non
self-adjoint)versions of Birman-Schwinger principle  depending on
the dimension $d=1,2,3$.

In \cite{Lakaev-Holmatov11} the existence of positive coupling
constant threshold $\mu=\mu(k)>0$ for the Schr\"odinger operator
$H_{\mu}(k),$ $k\in\T^d,\,d\geq3$ associated to a system of two
identical quantum mechanical particles (bosons) moving on the
lattice $\Z^d,$ $d\geq3$ and interacting via zero-range repulsive
potential is proved: the operator has no eigenvalues for any
$0<\mu<\mu(k)$, nevertheless for each $\mu>\mu(k)$ it has a unique
eigenvalue $E(\mu,k)$ lying above the essential spectrum. Moreover,
an absolutely convergent expansions for the eigenvalue $E(\mu,k)$ at
$\mu=\mu_0$ depending on $d\geq3$ was found. However, in
\cite{LKhLtmf12} the absence of positive coupling constant
thresholds, i.e., the existence for eash $\mu>0$ a unique eigenvalue
$E(\mu,k)$ of the discrete Schr\"odinger operator
$H_{\mu}(k),\,k\in\T^d,\,d=1,2,$ associated to a system of two
identical quantum-mechanical particles (bosons) on $\Z^d, d=1,2$ is
proved and an absolutely convergent expansion for $E(\mu,k)$ at
$\mu=0$ was found.

Notice that for the Schr\"odinger operators of a system of two
arbitrary particles moving on $\R^d$ or $\Z^d,$ $d\geq1$  the
coupling constant threshold vanishes, if $d=1,2$ and it is positive,
if $d\ge3$. Furthermore, for the Schr\"odinger operators of a system
of two identical particles moving on $\R^d$ or $\Z^d,d=1,2$ the
coupling constant threshold vanishes, if the particles are bosons,
however it is positive, if they are fermions (see, e.g.,
\cite{Klaus1976}, \cite{Simon1976}, \cite{Lakaev92},
\cite{LakaevTilavova1994}, \cite{CivitareseGadella}).

For a wide class of the two-particle discrete Schr\"odinger
operators $H(k),\,k\in\T^d$ on the $d$-dimensional lattice $\Z^d$,
$d\ge 3$, for all nonzero values of quasi-momentum $k$ the existence
of eigenvalues of $H(k)$ below the threshold, under the assumption
that $H_\mu(0)$ has either a threshold energy resonance or a
threshold eigenvalue at the threshold (bottom) of the essential
spectrum was proved \cite{{ALMM06}}. Similar result for the
Friedrichs model was obtained in \cite{ALzM07}.

In \cite{Nishida2013} a system of $3$ nonrelativistic spinless
fermions in $2$ dimensions, which interact through
spherically-symmetric pair interactions was considered. A claim has
been made for the existence of the so-called super Efimov effect.
Namely, if the interactions in the system are fine-tuned to a p-wave
resonance, an infinite number of bound states appears, whose
negative energies are scaled according to the double exponential
law.  The mathematical proof that such a system indeed has an
infinite number of bound levels is presented in \cite{Gridnev2014}.
It is also proved that $\lim_{E\rightarrow0} |\ln| \ln E||^{-1}N(E)
= 8/(3p)$, where $N(E)$ is the number of bound states with the
energy less than $-E < 0$.

The reasons to consider the family of the generalized Friedrichs
models associated to a system of two particles interacting via pair
local {\it attractive or repulsive potentials} are as follows:

(i) The family of generalized Friedrichs models generalizes and
involves some important behaviors of the Schr\"odingier operators
associated to the hamiltonian for systems of two arbitrary particles
moving on $\R^d$ or $\Z^d,d\geq 1,$ as well as, the hamiltonian for
systems of both bosons and fermions (see, e.g.,
\cite{LAK2012},\,\cite{Lakaev86});

(ii) The works (see, e.g., \cite{AbdullaevLakaev2003},
\cite{ALzM2004}, \cite{ALK2012}, \cite{Lakaev1993}) devoted to the
Efimov effect for $3$ quantum mechanical particles on the hypercubic
lattice $\Z^3$ gives an assurance on the existence of the super
Efimov effect for a system of $3$ spinless fermions on the two
dimensional hypercubic lattice $\Z^2$ interacting via short range
pair potentials if the interactions  was tuned in such a way that
pairs of fermions has no negative eigenvalues, but are at the
coupling constant threshold;

(iii) There are interesting features of the super Efimov effect,
that a system of $3$ spinless fermions on the lattice $\Z^2$ may
have an infinite number of bound states depending on the
quasi-momentum, although the same system in $3$ dimensions has at
most a finite number of levels with negative energy (see, e.g.,
\cite{ALzM2004}, \cite{ALK2012},\cite{ALDj2009}, \cite{Lakaev1993}).

We study, in the important case $d=2$ (see, e.g.,
\cite{Gridnev2014}, \cite{Nishida2013}), the coupling constant
thresholds for the generalized Friedrichs models $H_\mu(p),$
$p\in\T^2$, $\mu>0$, which are  generalizations (more general
dispersion relations) of the two-particle Schr\"{o}dinger operators
$H_{\mu}(k),$ $k\in\T^2$, $\mu>0$ on the lattice $\Z^2$ with the
local interactions.

In the current paper, we answer the following question:  what is the
character of convergence of eigenvalue $E(\mu,p)$ of the generalized
Friedrichs models $H_\mu(p),\,p\in\T^2$, $\mu>0$ to $m(p)=\inf
\sigma_{\rm ess}(H_{\mu}(p))$, the bottom
 of the essential spectrum
as $\mu\to \mu(p)\ge0$?

Furthermore, unlike to the cases
\cite{Klaus-Simon80},\cite{LKhLtmf12} and \cite{Lakaev-Holmatov11},
we found the absolutely convergent expansions (asymptotics) of the
eigenvalues $E(\mu,p)$ at the coupling constant thresholds
$\mu(p)\ge0$ for the operators, which are associated to a system of
two fermions (see,  $(ii)$ and $(iii)$ of Theorem \ref{main}).

Surprisingly, we derive absolutely convergent expansions
(asymptotics) of the eigenvalues $E(\mu,p)$ at $\mu(p)$, the
coupling constant threshold, in the cases, when the threshold $m(p)$
is: $(i)$ none an threshold eigenvalue or a threshold resonance;
$(ii)$ a threshold resonance; $(iii)$  an threshold eigenvalue (see
Theorem \ref{main}).

A family $H_\mu(p),$ $\mu>0,$ $p\in\mathbb{T}^d$ of the generalized
Friedrichs models with the local perturbation of rank one,
associated to a system of two particles, moving on the $d$-
dimensional lattice $\mathbb{Z}^d,$ was considered in
\cite{LAK2012},\cite{LDK2013} and \cite{LDD2019}. A criterion to
existence of a coupling constant threshold $\mu=\mu_0(p)\ge0$
depending on the parameters of the model was proved in
\cite{LAK2012} and \cite{LDD2019}. An absolutely convergent
expansion for the unique eigenvalue $E(\mu,p)$ of $H_\mu(p)$ at
$\mu(p)=0$ was found in \cite{LDK2013}.

\section{Preliminaries and Main Results.}
Let $\Z^2$ be the two-dimensional hypercubic lattice and
$\T^2=(\mathbb{R}/2\pi\mathbb{Z})^2=(-\pi,\pi]^2$ be the
two-dimensional torus (Brillion zone), the dual group of $\Z^2.$

Let $L^2(\T^2)$ be the Hilbert space of square-integrable functions
defined on the torus $\T^2 $ and $\varphi \in L^2(\T^2),$
$f_0\in\mathbb{C}.$

We define the operator $\Phi:L^2(\T^2)\to \mathbb{C}$ and its
adjoint $\Phi^*: \mathbb{C}\to L^2(\T^2)$ as
$$\Phi f=(f,\varphi)_{L^2(\T^2)} \,\mbox{and}\, \,\, \Phi^* f_0=\varphi(q) f_0,$$  where
$(\cdot,\cdot)_{{L}^{2}(\T^2)}$ is inner product in ${L}^{2}(\T^2).$

Let $H_0(p),$ $p\in\T^2$ is a multiplication operator by the
function $w_p(\cdot):=w(p,\cdot):$
\begin{equation}\nonumber\label{h0}
(H_0(p)f)(q)=w_p(q)f(q),\quad f\in L^2(\T^2).
\end{equation}
Then the generalized Friedrichs model $H_{\mu}(p),p\in\T^2$ is
defined in $L^2(\T^2)$ as follows:
\begin{equation*}\label{H}
H_{\mu}(p)=H_0(p)-\mu\Phi^*\Phi,\,\,\mu>0.
\end{equation*}

The perturbation $v=\Phi^{\ast}\Phi$ of $H_0(p), p\in\T^2$ is
positive operator of rank one. Consequently, by the well-known Weyl
theorem \cite[Theorem XIII.14]{RSIV78} on compact perturbations, the
essential spectrum of $H_\mu(p), p\in\T^2$ satisfies the equalities
$$
\sigma_{ess}(H_\mu(p))=\sigma_{ess}(H_0(p))=\sigma(H_0(p))
$$
and fills the segment $[m(p),\,M(p)]$ on the real axis, where
\begin{equation*}
m(p)=\min_{q\in \T^2}w_p(q),\quad M(p)= \max_{q\in \T^2}w_p(q).
\end{equation*}
\begin{remark}\label{lem3.1200}
We note that the positivity of $\Phi^{\ast}\Phi$ yields that the
operator $H_{\mu}(p)$ has none eigenvalue lying above $M(p)$.
\end{remark}

Throughout the paper we assume the following
\begin{hypothesis}\label{Hyp1}
Assume that the following conditions are satisfied:
\begin{itemize}
\item[(i)] the function $\varphi(\cdot)$ is nontrivial and real-analytic on $\T^2;$
\item[(ii)] the function $w(\cdot,\cdot)$ is real-analytic on
$(\T^2)^2=\T^2\times \T^2$ and has a unique non degenerated minimum
at $(0,0)\in (\T^2)^2$.
\end{itemize}
\end{hypothesis}
Hypothesis \ref{Hyp1} yields that there existence a
$\delta$-neighborhood $U_{\delta }(0)\subset \T^2$ of the point
$p=0\in \T^2$ and an analytic function $q_0:U_{\delta}(0)\to \T^2$
that for any $p\in U_{\delta}(0)$ the point $q_0(p)\in\T^2$ is a
unique non degenerated minimum of the function $w_p(\cdot)$.

For any $\mu>0$ and $p\in \T^2$ we define an analytic function
$\Delta(\mu,p; \cdot)$ (the Fredholm determinant, associated to the
operator $H_\mu(p)$)\,in $\mathbb{C}\setminus [m(p); M(p)]$ as
follows
\begin{equation*}\label{Det.H.lamb}
\Delta(\mu,p; \cdot)=1-\mu \Omega(p;\cdot),
\end{equation*}
where
\begin{equation}\label{omega}
\Omega(p;z)=\int\limits_{\T^2}
\frac{\varphi^2(s)ds}{w_p(s)-z},\qquad p\in \T^2, \quad z\in
\mathbb{C}\backslash [m(p); M(p)].
  \end{equation}
In our proofs  we apply results of Lemmas 3.1 and 3.8 of
\cite{LAK2012} and hence, for  conveniens of the readers, we recall
these results as the following lemma
\begin{lemma}\label{egenvalue}
Assume Hypothesis \ref{Hyp1}.
\begin{enumerate}
\item[{\rm (i)}]
A number $z\in \bbC\setminus \sigma_{ess}(H_{\mu }(p)),$ $p\in\T^2$
is an eigenvalue of the operator $H_{\mu}(p)$ if  and  only  if
$$\Delta(\mu,p; z)=0.$$ The corresponding  eigenfunction $f$ has
form
\begin{equation*}\label{f}
f_{\mu, p}(q)=\frac{C\mu\varphi(q)}{w_p(q)-z},
\end{equation*}
and is analytic on $\T^2,$ where $C=C(p)>0$ is the normalizing
constant.

\item[{\rm (ii)}]
Let $s=q_{0}(p),$ $p\in U_{\delta}(0)$ be a unique non degenerated
minimum point of the function $w_{p}(s)$ and $\varphi(q_{0}(p))=0$
resp. $\varphi(q_{0}(p))=\nabla\varphi(q_{0}(p))=0$. Then
\begin{align*}
\Delta(\mu,p;m(p))=
1-\mu\int\limits_{\T^2}\frac{\varphi^2(q)dq}{w_p(q)-m(p)}=0
\end{align*}
if and only if $z=m(p)$, the bottom of the essential spectrum
$\sigma_{ess}(H_{\mu }(p))$ is a threshold resonance resp. an
eigenvalue for the operator $H_{\mu}(p)$, $\mu>0$, i.e., the
equation \begin{equation*}\label{k} H_{\mu}(p)f=m(p)f\end{equation*}
has a nonzero solution
\begin{equation*}\label{kkk} f_{\mu,p}(\cdot)=\frac{C\mu
\varphi(\cdot)}{w_p(\cdot)-m(p)}\end{equation*} which belongs to
$L_1(\T^2)\backslash L_2(\T^2)$ resp. $L_2(\T^2)$, where $C=C(p)>0$.
\end{enumerate}
\end{lemma}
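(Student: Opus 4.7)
For part (i), the strategy is the standard rank-one eigenvalue equation. Write out $H_\mu(p)f = zf$ as $(w_p(q)-z)f(q) = \mu(f,\varphi)\varphi(q)$. Since $z\notin\sigma_{\mathrm{ess}}(H_\mu(p))=\sigma(H_0(p))=[m(p),M(p)]$, the multiplication operator $H_0(p)-z$ is boundedly invertible, so any eigenfunction must have the form
\[
f_{\mu,p}(q)=\frac{\mu(f,\varphi)\,\varphi(q)}{w_p(q)-z}.
\]
Taking the inner product with $\varphi$ on both sides and using the definition of $\Omega(p;z)$ in \eqref{omega} yields $(f,\varphi)\bigl[1-\mu\Omega(p;z)\bigr]=0$. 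A nonzero eigenfunction forces $(f,\varphi)\ne 0$ (otherwise $f\equiv 0$ from the displayed formula), hence $\Delta(\mu,p;z)=0$. Conversely, if $\Delta(\mu,p;z)=0$, the formula above (with $C$ chosen as the normalizing constant) defines a nontrivial element of $L^2(\T^2)$ which by construction is an eigenvector. Analyticity of $f_{\mu,p}$ on $\T^2$ follows because $w_p(q)-z$ does not vanish on $\T^2$ and $\varphi$ is real-analytic by Hypothesis~\ref{Hyp1}.

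For part (ii) the critical point $z=m(p)$ lies on the boundary of $\sigma_{\mathrm{ess}}(H_\mu(p))$, so $1/(w_p(q)-m(p))$ is singular precisely at the unique non-degenerate minimum $q_0(p)$. Hypothesis~\ref{Hyp1} gives a local expansion
\[
w_p(q)-m(p)=\tfrac12\bigl\langle W(p)(q-q_0(p)),\,q-q_0(p)\bigr\rangle+O(|q-q_0(p)|^{3}),\qquad W(p)>0,
\]
so that $w_p(q)-m(p)\asymp|q-q_0(p)|^{2}$ in a neighborhood of $q_0(p)$. In dimension two this singularity is exactly borderline: the integrability of $\varphi^{2}(q)/(w_p(q)-m(p))$ and of the tentative eigenfunction $\varphi(q)/(w_p(q)-m(p))$ are controlled entirely by the order of vanishing of $\varphi$ at $q_0(p)$.

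The main computations are the following three integrability dichotomies, all reduced (via partition of unity and a local change of variable diagonalizing $W(p)$) to polar integrals around $q_0(p)$:
\begin{align*}
\varphi(q_0(p))\neq 0 &\Longrightarrow \Omega(p;m(p))=+\infty, \text{ so } \Delta(\mu,p;m(p))\neq 0 \text{ for any } \mu>0;\\
\varphi(q_0(p))=0,\ \nabla\varphi(q_0(p))\neq 0 &\Longrightarrow \Omega(p;m(p))<\infty,\ f_{\mu,p}\in L_1(\T^2)\setminus L_2(\T^2);\\
\varphi(q_0(p))=\nabla\varphi(q_0(p))=0 &\Longrightarrow \Omega(p;m(p))<\infty,\ f_{\mu,p}\in L_2(\T^2).
\end{align*}
Indeed, locally $|\varphi(q)|\asymp|q-q_0(p)|^{k}$ with $k=0,1,2$ in the three cases, while $|f_{\mu,p}(q)|\asymp|q-q_0(p)|^{k-2}$, and polar coordinates give $\int_{|q-q_0|<\delta}|q-q_0|^{\alpha}\,dq$ converges iff $\alpha>-2$. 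Combining the first case with the equation $\Delta(\mu,p;m(p))=1-\mu\Omega(p;m(p))$, we see the determinant can vanish at $z=m(p)$ only when $\varphi(q_0(p))=0$, and given that vanishing, $\Delta(\mu,p;m(p))=0$ is equivalent (via part (i) extended to the boundary) to the formal function $f_{\mu,p}=C\mu\varphi/(w_p-m(p))$ solving $H_\mu(p)f=m(p)f$ in the distributional sense; its location in $L_1\setminus L_2$ versus $L_2$ is then dictated precisely by the second/third dichotomies, giving the resonance/eigenvalue distinction.

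\textbf{Main obstacle.} The subtlety is the boundary case $z=m(p)$, where the resolvent of $H_0(p)$ fails to be bounded on $L^{2}(\T^2)$ and the eigenvalue equation must be interpreted either pointwise a.e.\ or distributionally. Making rigorous the implication ``$\Delta(\mu,p;m(p))=0\Longrightarrow f_{\mu,p}$ is a (generalized) solution'' therefore requires the local quadratic analysis above: one must verify that $(w_p(q)-m(p))f_{\mu,p}(q)=\mu(f_{\mu,p},\varphi)_{L^2}\varphi(q)$ holds with the inner product interpreted in the sense that the integral $\int f_{\mu,p}\bar{\varphi}\,dq$ converges (the case $k=1$ shows $f_{\mu,p}\bar{\varphi}\sim|q-q_0(p)|^{-1}\cdot|q-q_0(p)|$ is bounded, so this pairing is legitimate even in the resonance case). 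Once this is established the two reciprocal directions in (ii) are symmetric, and the proof is complete.
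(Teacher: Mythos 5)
Your argument is correct and follows the standard route; the paper itself gives no proof of this lemma but recalls it verbatim from Lemmas 3.1 and 3.8 of \cite{LAK2012}, where the same resolvent/rank-one computation for (i) and the same local quadratic analysis at $q_0(p)$ for (ii) are carried out. The only point to tighten is that in the resonance case $|\varphi(q)|\asymp|q-q_0(p)|$ holds only as an upper bound in general; the matching lower bound is valid on a cone about the direction of $\nabla\varphi(q_0(p))$, which is exactly what is needed to conclude $f_{\mu,p}\notin L_2(\T^2)$.
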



\begin{definition}\label{definition}
Define $\mu(p)>0$ as
\begin{equation}\label{integral}
\mu(p)=\left(\int\limits_{\T^2}
\frac{\varphi^2(s)ds}{w_p(s)-m(p)}\right)^{-1}>0,
\end{equation} if $\varphi(q_0(p))=0$ and $\mu(p)=0,$ if
$\varphi(q_0(p))\neq0,$ $p\in U_{\delta}(0).$ \end{definition}

\begin{remark} Note that in the case $\varphi (q_{0}(p))=0,$
$p\in U_{\delta}(0)$ the existence of the integral and hence
positive coupling constant in \ref{integral} is proven in
\cite{LAK2012}.
\end{remark}

In the next theorem we recall, for reading convenience, a criterion
for existence of a unique eigenvalue below $m(p)$, the bottom of the
essential spectrum of the operator $H_{\mu}(p),$ $p\in
U_{\delta}(0)$ (see, \cite[Theorem 2.3]{LAK2012}).

\begin{theorem} \label{last}
Assume Hypothesis \ref{Hyp1}.  Then for any fixed $p\in
U_{\delta}(0)$ the operator $H_\mu(p)$ has  a unique eigenvalue
$E(\mu,p)$  below $m(p)$, the bottom of the essential spectrum, if
and only if $\mu>\mu(p)$. Moreover: if $\varphi(q_0(p))=0$,
$\nabla\varphi(q_0(p))\neq0$ and $\mu=\mu(p)$, then the threshold
$z=m(p)$  is a virtual level of the operator $H_{\mu}(p)$, i.e. the
equation $H_{\mu}(p)f=m(p)f$ has a non-zero solution $f\in
L_1(\mathbb{T}^2)\backslash L_2(\mathbb{T}^2)$; if $\varphi
(q_0(p))=\nabla\varphi(q_0(p))=0$ and $\mu=\mu(p)$, then the number
$z=m(p)$ is an eigenvalue of the operator $H_{\mu}(p)$.
\end{theorem}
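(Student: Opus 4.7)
The plan is to use the Fredholm determinant characterization from Lemma \ref{egenvalue}(i): a number $E<m(p)$ is an eigenvalue of $H_\mu(p)$ if and only if $\Delta(\mu,p;E)=0$, i.e.\ $\Omega(p;E)=1/\mu$. So the existence question reduces to a one-variable analysis of the scalar function $z\mapsto \Omega(p;z)$ on the interval $(-\infty,m(p))$.

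First I would establish that for each fixed $p\in U_\delta(0)$ the function $z\mapsto\Omega(p;z)$ is real-analytic and strictly increasing on $(-\infty,m(p))$ (differentiation under the integral sign gives $\partial_z\Omega=\int\varphi^2(w_p-z)^{-2}\,ds>0$), with $\lim_{z\to-\infty}\Omega(p;z)=0$ by dominated convergence, and with a monotone limit
\[
\Omega(p;m(p)^-)=\int_{\T^2}\frac{\varphi^2(s)\,ds}{w_p(s)-m(p)}\in(0,+\infty]
\]
by monotone convergence. The intermediate value theorem then gives: the equation $\Omega(p;E)=1/\mu$ has a (necessarily unique) solution $E(\mu,p)<m(p)$ if and only if $\mu\cdot\Omega(p;m(p)^-)>1$.

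Next I would evaluate this limit using the non-degenerate minimum of $w_p$ at $q_0(p)$ (Hypothesis \ref{Hyp1}(ii)). By the Morse lemma, $w_p(s)-m(p)\asymp|s-q_0(p)|^2$ near $q_0(p)$. Two cases:
\emph{Case $\varphi(q_0(p))\ne 0$.} Then $\varphi^2$ is bounded below by a positive constant near $q_0(p)$, so the integrand behaves like $|s-q_0(p)|^{-2}$, which is not integrable in dimension two; hence $\Omega(p;m(p)^-)=+\infty$, which consistently with Definition \ref{definition} gives $\mu(p)=0$, and a unique eigenvalue exists for every $\mu>0=\mu(p)$.
\emph{Case $\varphi(q_0(p))=0$.} By \cite{LAK2012} (cited after Definition \ref{definition}) the integral converges, and by the very definition of $\mu(p)$ we have $\Omega(p;m(p)^-)=1/\mu(p)$. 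Therefore an eigenvalue below $m(p)$ exists if and only if $\mu>\mu(p)$, and when it does, it is unique by the strict monotonicity of $\Omega(p;\cdot)$.

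Finally, for the threshold case $\mu=\mu(p)$ with $\varphi(q_0(p))=0$, one has $\Delta(\mu(p),p;m(p))=1-\mu(p)\Omega(p;m(p))=0$, so Lemma \ref{egenvalue}(ii) applies and the function $f_{\mu(p),p}(q)=C\mu(p)\varphi(q)/(w_p(q)-m(p))$ solves $H_{\mu(p)}(p)f=m(p)f$. Its integrability class is read off from the Morse expansion around $q_0(p)$: if $\nabla\varphi(q_0(p))\ne 0$ then $\varphi(s)\asymp|s-q_0(p)|$, so $f_{\mu(p),p}(s)\asymp|s-q_0(p)|^{-1}$, which lies in $L_1(\T^2)\setminus L_2(\T^2)$ (the virtual-level/threshold-resonance case); if in addition $\nabla\varphi(q_0(p))=0$ then $\varphi(s)=O(|s-q_0(p)|^2)$, so $f_{\mu(p),p}$ is bounded near $q_0(p)$ and hence belongs to $L_2(\T^2)$, giving a genuine eigenvalue at $m(p)$.

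The only real obstacle is the behaviour of $\Omega(p;z)$ at $z=m(p)^-$ and of the putative eigenfunction at $q_0(p)$; both boil down to a Morse/Taylor expansion of $w_p$ and $\varphi$ at the non-degenerate critical point, and the convergence of the critical integral in the vanishing-$\varphi$ case is already supplied by \cite{LAK2012}, so the remaining work is bookkeeping via the strictly monotone scalar function $\Omega(p;\cdot)$.
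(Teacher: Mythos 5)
Your argument is correct, and in fact the paper itself offers no proof of this theorem: it is explicitly recalled from \cite[Theorem 2.3]{LAK2012}, so there is nothing internal to compare against. Your route --- reduce to the scalar equation $\Omega(p;E)=1/\mu$ via Lemma \ref{egenvalue}(i), then exploit strict monotonicity of $z\mapsto\Omega(p;z)$ on $(-\infty,m(p))$, its vanishing at $-\infty$, and its (finite or infinite) monotone limit at $m(p)^-$ --- is exactly the mechanism the paper relies on implicitly: the opening of the proof of Theorem \ref{main} introduces the same monotone decreasing function $\mu(p,z)=(\Omega(p,z))^{-1}$ and its inverse $E(\cdot,p)$, and the dichotomy between $\Omega(p;m(p)^-)=+\infty$ (when $\varphi(q_0(p))\neq0$) and $\Omega(p;m(p)^-)=1/\mu(p)$ (when $\varphi(q_0(p))=0$) is precisely the content of Definition \ref{definition} and the remark following it. Your treatment of the threshold case via Lemma \ref{egenvalue}(ii) is also consistent with how the paper packages that information.

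One small imprecision worth fixing: when $\varphi(q_0(p))=0$ and $\nabla\varphi(q_0(p))\neq0$, the two-sided estimate $\varphi(s)\asymp|s-q_0(p)|$ is false pointwise, since $\varphi$ vanishes along a curve through $q_0(p)$; you only have $|\varphi(s)|\lesssim|s-q_0(p)|$. That upper bound suffices for $f\in L_1$, but for $f\notin L_2$ you should instead integrate the leading homogeneous term in polar coordinates: with $a=\nabla\varphi(q_0(p))$ and $Q$ the Hessian form of $w_p$ at $q_0(p)$, the angular average $\int_{S^1}|a\cdot\omega|^2 Q(\omega)^{-2}\,d\omega$ is strictly positive, so the radial integral $\int_0^\varepsilon r^{-1}\,dr$ still diverges. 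With that repair the proof is complete.
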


The main result of the current paper, is to found an absolutely
convergent expansions for the eigenvalue $E(\mu,p)$ at the coupling
constant threshold $\mu(p)\ge0$, in the cases, when the threshold
$m(p)$ is : $(i)$ none a threshold eigenvalue or a threshold
resonance; $(ii)$ a threshold resonance; $(iii)$ a threshold
eigenvalue.

\begin{theorem} \label{main}
Assume Hypothesis \ref{Hyp1}. Then for any fixed $p\in
U_{\delta}(0),$ $\mu$ tends to $\mu(p)$ if and only if $E(\mu,p)$
tends to
 the threshold $m(p)$. Moreover for any fixed $p\in U_{\delta}(0)$ and sufficiently small positive
$\mu-\mu(p)$ a unique eigenvalue $E(\mu,p)$ of the operator
$H_\mu(p)$ has the following absolutely convergent expansions:
\begin{enumerate}
\item[{\rm (i)}] Let $\varphi(q_0(p))\neq0$. Then  \begin{equation*}\label{as1}E(\mu,p)= m(p)-
a(p)e^{(\alpha_0(p)\mu)^{-1}}- \sum\limits_{m\geq1, n\geq 1,
m+n\geq3 }{c}(m,n)(p)\mu^{m}\tau^{n},\end{equation*}
$$\tau=\frac1\mu \,e^{(\alpha_0(p)\mu)^{-1}},\,\, a(p)=e^{-c_0(p)/\alpha_0(p)},\quad
 \alpha_0(p)<0$$ and $c_0(p),$ $c(n,m)(p),$
 $m,n=0,1,2,...$-- real numbers.

\item[{\rm (ii)}] Let $\varphi(q_0(p))=0$ and
$\nabla\varphi(q_0(p))\neq0$. Then \begin{equation*}\label{ass1}
E(\mu,p) =m(p)-
[\hat{\alpha}_1(p)\mu^2(p)]^{-1}\frac{\hat{\mu}}{\ln\hat{\mu}^{-1}}
-\sum\limits_{n\geq1, r\geq1, s\ge0, n+r+s\ge3} {c}(n,s,r)(p)\,
\tau^{n}\,\,\hat{\mu}^{r}\,\,\omega^{s}\,\,, \end{equation*} where $
\hat{\alpha}_1(p)>0$ is defined in \eqref{qqq1},\, $c(n,r,s)(p),\,\,
n,r,s=0,1,2...$--real numbers and
$$\tau=\frac{1}{\ln\hat{\mu}^{-1}}, \quad \omega=\frac{\ln \ln
\hat{\mu}^{-1}}{\ln \hat{\mu}^{-1}},\quad \hat{\mu}=\mu-\mu(p).$$
\item[{\rm (iii)}]
Let $\varphi(q_0(p))=\nabla\varphi(q_0(p))=0$. Then
\begin{equation}\label{ox}E(\mu,p)= m(p)-a(p)\hat{\mu}
-\sum\limits_{l\geq0,s\ge1, l+s\geq2}
\hat{c}(l,s)(p)\,\tau^{l}\,\hat{\mu}^{s}\end{equation}
$$
\tau=\hat{\mu}\ln\hat{\mu},\quad\hat{\mu}={\mu}-{\mu}(p),$$ where
$a(p)>0$ and $\hat{c}(l,s)(p),l,s=0,1,2,...$ are real numbers.
\end{enumerate}
\end{theorem}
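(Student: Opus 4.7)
The plan is to reduce the eigenvalue problem to an analysis of the Fredholm determinant equation. Writing $z = m(p) - \xi$ with $\xi \geq 0$ small, Lemma~\ref{egenvalue} shows that the eigenvalue $E(\mu, p) = m(p) - \xi(\mu, p)$ is characterized by
\[
\mu\,\Omega(p; m(p) - \xi) = 1.
\]
Since $\partial_{z}\Omega(p;z) > 0$ for $z < m(p)$, the map $\xi \mapsto \Omega(p; m(p) - \xi)$ is strictly decreasing and real-analytic on $(0, M(p)-m(p))$. Together with the facts that $\Omega(p; m(p)) = +\infty$ in case (i) and $\Omega(p; m(p)) = \mu(p)^{-1}$ in cases (ii)--(iii), this yields the equivalence ``$\mu\to\mu(p)$ iff $E(\mu,p)\to m(p)$'' and reduces the theorem to a careful asymptotic analysis of $\xi(\mu, p)$ as $\mu \to \mu(p)^{+}$.

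The first step is to isolate the singular behaviour of $\Omega(p; m(p)-\xi)$ as $\xi\to 0^{+}$. By Hypothesis~\ref{Hyp1}, $w_p - m(p)$ has a unique non-degenerate minimum at $q_0(p)$, so on a small neighbourhood of $q_0(p)$ one applies a Morse-type analytic change of variables $s = s(t)$ satisfying $w_p(s(t)) - m(p) = |t|^2$, and writes
\[
\Omega(p; m(p) - \xi) = \int_{|t|<\delta'} \frac{\psi(t)\,dt}{|t|^2 + \xi} + r(\xi, p),
\]
where $\psi(t) = \varphi^{2}(s(t))\,|\det \partial_t s(t)|$ is real-analytic near $0$ and $r(\cdot, p)$ is real-analytic in $\xi$ at $\xi = 0$. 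The three cases of the theorem then correspond to the vanishing order of $\psi$ at the origin: (i) $\psi(0)\neq 0$ produces the logarithmic divergence $\Omega = \pi\psi(0)\ln(1/\xi) + c_0(p) + O(\xi\ln(1/\xi))$; (ii) $\psi(0)=0$ with non-degenerate quadratic part, i.e.\ $\varphi(q_0(p))=0$, $\nabla\varphi(q_0(p))\neq 0$, gives $\Omega = \mu(p)^{-1} - \hat{\alpha}_{1}(p)\,\xi\ln(1/\xi) + O(\xi)$ with $\hat{\alpha}_{1}(p)>0$ computed from $D^{2}\psi(0)$ and the Hessian of $w_p$ at $q_0(p)$; (iii) higher-order vanishing of $\psi$ yields $\Omega = \mu(p)^{-1} + A(p)\,\xi + B(p)\,\xi^{2}\ln(1/\xi) + O(\xi^{2})$ with $A(p)\neq 0$.

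The second step is to solve $\mu\,\Omega = 1$ as an analytic implicit equation in suitably rescaled variables. In case (i), set $\alpha_0(p) = -\pi\psi(0) < 0$ and $\tau = \mu^{-1}\exp((\alpha_0(p)\mu)^{-1})$; the determinant equation rewrites as $\xi = a(p)\,\tau\,F(\mu, \tau, \xi)$ with $F$ jointly real-analytic at the origin and $F(0,0,0)=1$, so the analytic implicit function theorem yields an absolutely convergent double series in $(\mu,\tau)$ with $a(p) = e^{-c_0(p)/\alpha_0(p)}$. Case (iii) is similar: inverting $\hat{\mu} = A(p)\xi + B(p)\xi^{2}\ln(1/\xi) + O(\xi^{2})$ in the variables $(\hat{\mu}, \tau = \hat{\mu}\ln\hat{\mu})$ gives the expansion~\eqref{ox}. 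In case (ii), the leading relation $\hat{\mu} \approx \hat{\alpha}_{1}(p)\,\mu(p)^{2}\,\xi\ln(1/\xi)$ must be inverted; one round of iteration shows that the natural independent small parameters are $\tau = 1/\ln\hat{\mu}^{-1}$ and $\omega = \ln\ln\hat{\mu}^{-1}/\ln\hat{\mu}^{-1}$, and after this substitution the determinant equation becomes a jointly real-analytic equation in $(\tau,\omega,\hat{\mu})$ with non-degenerate linear part in $\xi$, so the implicit function theorem once again delivers the expansion.

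The main obstacle is case (ii): the nested logarithms force the use of two transcendental small parameters, and the delicate point is to verify that after the change of variables the governing equation is genuinely jointly real-analytic at the origin in $(\tau,\omega,\hat{\mu})$. This requires an all-orders asymptotic expansion of $\int_{|t|<\delta'}\psi(t)\,dt/(|t|^2+\xi)$ in $\xi$ and $\ln(1/\xi)$---essentially a Hadamard finite-part computation carried out uniformly for $p\in U_\delta(0)$---together with analytic control on the remainder $r(\xi, p)$. Once these expansions and the accompanying error bounds are in place, absolute convergence of each of the three series follows by a standard application of the analytic implicit function theorem on a polydisc around the origin in the corresponding rescaled variables.
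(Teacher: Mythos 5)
Your overall route coincides with the paper's: reduce to the zero set of the Fredholm determinant $\Delta(\mu,p;z)=1-\mu\Omega(p;z)$, use monotonicity of $\Omega(p;\cdot)$ for the ``if and only if'' part, expand $\Omega(p;m(p)-\xi)$ near $\xi=0$ via the parametric Morse lemma and polar coordinates (this is exactly the content of Lemma \ref{det_expansion}), and then rescale and invoke the analytic implicit function theorem. The paper itself writes out only case (iii) in full and refers to \cite{LKhLtmf12}, \cite{LDK2013} for (i)--(ii), so your sketches of (i) and (ii) are at essentially the same level of detail as the source.

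There is, however, one genuine gap in your case (iii): you assert that the linear coefficient $A(p)$ in $\Omega(p;m(p)-\xi)=\mu(p)^{-1}+A(p)\xi+B(p)\xi^{2}\ln(1/\xi)+O(\xi^{2})$ is nonzero, but this does not follow from merely writing down the expansion, and it is precisely the non-degeneracy $\partial F/\partial u(0,0,0)\neq 0$ on which your implicit function theorem step rests (and without which the rescaling constant $a(p)=[-\hat c_{1}(p)\mu^{2}(p)]^{-1}$ is not even defined). The paper devotes a separate statement, Lemma \ref{koef}, to this point, proving $|\hat\alpha_{1}(p)|+|\hat c_{1}(p)|\neq 0$ by contradiction from the concavity of $\mu\mapsto E(\mu,p)$. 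You must supply some argument here; a direct one is available: since $\hat\alpha_{1}(p)=0$ in case (iii), one has
\begin{equation*}
\hat c_{1}(p)=\lim_{\xi\to 0^{+}}\frac{\Omega(p;m(p)-\xi)-\Omega(p;m(p))}{\xi}
=-\int_{\T^{2}}\frac{\varphi^{2}(s)\,ds}{\bigl(w_{p}(s)-m(p)\bigr)^{2}}<0,
\end{equation*}
the integral converging because $\varphi$ vanishes to order at least $2$ at $q_{0}(p)$ while $w_{p}-m(p)$ vanishes to order exactly $2$ there. A second, smaller slip: in case (i) you write the rescaled equation as $\xi=a(p)\,\tau\,F(\mu,\tau,\xi)$ with $F(0,0,0)=1$ and $\tau=\mu^{-1}e^{(\alpha_{0}(p)\mu)^{-1}}$; solving $1=\mu\alpha_{0}(p)\ln\xi+\mu c_{0}(p)+\dots$ gives the leading term $\xi\approx a(p)\,e^{(\alpha_{0}(p)\mu)^{-1}}=a(p)\,\mu\tau$, not $a(p)\tau$, so the correct normalization is $\xi=a(p)\,\mu\tau\,F(\mu,\tau,\xi)$; as written your reduction disagrees with the claimed leading term of the theorem by a factor of $\mu$.
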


The asymptotics of the eigenvalue $E(\mu,p)$ at  the coupling
constant threshold $\mu(p)\ge0$ of the essential spectrum are given
in the following corollary.

\begin{corollary}
Assume Hypothesis \ref{Hyp1}. For any fixed $p\in U_{\delta}(0)$ the
following asymptotics are hold:
\begin{enumerate}
\item[{\rm (i)}] If $\varphi(q_0(p))\neq0,$ then
 \begin{equation}\nonumber\label{s} m(p)-E(\mu,p)=
 e^{-c_0(p)/\alpha_0(p)}\sigma + O([\mu^2\tau]),\quad as\quad \mu\to 0,\end{equation}
$$\sigma=e^{(\alpha_0(p)\mu)^{-1}},\quad\tau=\frac1\mu{e^{(\alpha_0(p)\mu)^{-1}}},\quad
 \alpha_0(p)<0.$$
\item[(ii)]  If $\varphi(q_0(p))=0$ and $\nabla\varphi(q_0(p))\neq0,$
 then
$$
m(p)-E(\mu,p)
=[\hat{\alpha}_1(p)\mu^2(p)]^{-1}\frac{\hat{\mu}}{\ln\hat{\mu}^{-1}}+O([\tau\omega\hat\mu])
\quad \mbox{as} \quad \mu\to \mu(p),\quad ,$$ where
$\hat{\alpha}_1(p)>0$ and
$$\tau=\frac{1}{\ln\hat{\mu}^{-1}},\quad \quad
\omega=\frac{\ln \ln \hat{\mu}^{-1}}{\ln\hat{\mu}^{-1}},\quad
\hat{\mu}=\mu-\mu(p).$$
\item[{\rm (iii)}]
Let $\varphi(q_0(p))=\nabla\varphi(q_0(p))=0$. Then
$$m(p)-E(\mu,p)=a(p)\hat{\mu}+O([\tau\hat{\mu}])
\quad  as\quad{\mu}\to {\mu(p)}.$$
$$
\tau=\hat{\mu}\ln\hat{\mu},\quad\hat{\mu}={\mu}-{\mu}(p),$$ where
$a(p)>0.$
\end{enumerate}
\end{corollary}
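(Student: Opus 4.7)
The plan is to deduce each asymptotic (i)--(iii) directly from the matching absolutely convergent expansion in Theorem~\ref{main} by peeling off the explicit leading non-constant term and then bounding the remainder by its dominant monomial in the chosen asymptotic scale. No new spectral input is required; the argument is essentially a term-by-term comparison of competing scales.

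For case (i), writing $\sigma=e^{(\alpha_0(p)\mu)^{-1}}$ so that $\tau=\sigma/\mu$, Theorem~\ref{main}(i) reads
\[
m(p)-E(\mu,p)=a(p)\sigma+\sum_{\substack{m\geq 1,\,n\geq 1\\ m+n\geq 3}}c(m,n)(p)\,\mu^{m}\tau^{n}.
\]
Because $\alpha_0(p)<0$, the quantity $\sigma$ decays faster than any power of $\mu$ as $\mu\to 0^{+}$. The two lowest-order terms of the sum are $\mu^{2}\tau=\mu\sigma$ (from $(m,n)=(2,1)$) and $\mu\tau^{2}=\sigma^{2}/\mu$ (from $(1,2)$), and their ratio $\sigma/\mu^{2}\to 0$ shows that $\mu^{2}\tau$ dominates. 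For every further pair with $m+n\geq 3$ one has $\mu^{m-2}\tau^{n-1}\to 0$, so absolute convergence folds the full tail into $O(\mu^{2}\tau)$.

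For case (ii), with $\hat\mu=\mu-\mu(p)\to 0^{+}$, $\tau=1/\ln\hat\mu^{-1}$, and $\omega=\tau\ln\ln\hat\mu^{-1}$, the restriction $n,r\geq 1$, $s\geq 0$, $n+r+s\geq 3$ produces three minimal monomials $\tau\omega\hat\mu$, $\tau^{2}\hat\mu$, $\tau\hat\mu^{2}$ corresponding to $(1,1,1)$, $(2,1,0)$, $(1,2,0)$. Since $\omega/\tau=\ln\ln\hat\mu^{-1}\to\infty$ and $\omega/\hat\mu\to\infty$ (because $\hat\mu\ln\hat\mu^{-1}\to 0$), the monomial $\tau\omega\hat\mu$ is the largest, and absolute convergence absorbs every remaining term into $O(\tau\omega\hat\mu)$. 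Case (iii) is analogous: with $\tau=\hat\mu\ln\hat\mu$ the minimal candidates in the sum $(l\geq 0$, $s\geq 1$, $l+s\geq 2)$ are $\hat\mu^{2}$ and $\tau\hat\mu$, and $|\tau|/\hat\mu=|\ln\hat\mu|\to\infty$ shows that $\tau\hat\mu$ dominates.

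The single step requiring more than arithmetic is the uniform domination of the tail series by its dominant minimal monomial. This is handled by splitting the remainder into the natural sub-series (each variable at its minimal exponent versus strictly higher), observing that on a small punctured neighborhood of $\mu(p)$ each sub-series equals the dominant minimal monomial times an analytic factor bounded by a geometric series in quantities tending to $0$. Apart from this routine bookkeeping, the corollary is an immediate consequence of Theorem~\ref{main}.
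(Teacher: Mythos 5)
Your proposal is correct and matches the paper's (implicit) route: the paper offers no separate proof of this corollary, presenting it as an immediate consequence of the convergent expansions in Theorem \ref{main}, and your term-by-term identification of the dominant tail monomial ($\mu^{2}\tau$, $\tau\omega\hat{\mu}$, and $\tau\hat{\mu}$ respectively) together with the absolute-convergence bound on the remaining sub-series is exactly the intended argument.
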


\section{Proof of the results}

The parametrical Morse lemma and Hypothesis \ref{Hyp1} yield the
existence, for each $p\in U_{\delta}(0)$, a map $s=\psi(y,p)$ of the
sphere $W_{\gamma}(0)\subset \mathbb{R}^2$ to a neighborhood
$U(q_0(p))$ of the point $q_0(p)=(q_1^0(p), q_2^0(p))\in \T^2$ such
that the function $w_p(\psi(y,p))$ can be represented as
$$w_p(\psi(y,p))=m(p)+y^2.$$ Here the function $\psi(y,\cdot)$
resp. $\psi(\cdot,p)$ is holomorphic in $U_{\delta}(0)$ resp.
$W_{\gamma}(0)$ and $\psi(0,p)=q_0(p)$. Moreover, the Jacobian
$J(\psi(y,p))$ of the mapping  $s=\psi(y,p) $ is analytic in
$W_{\gamma}(0)$ and positive, i.e.
\begin{equation}\label{Jacobian}
J(\psi(y,p))= \left\|\begin{matrix}\dfrac{\partial\psi_1}{\partial
y_1}(y,p)&\dfrac{\partial\psi_1}{\partial y_2}(y,p)\\
\dfrac{\partial\psi_2}{\partial
y_1}(y,p)&\dfrac{\partial\psi_2}{\partial
y_2}(y,p)\end{matrix}\right\|>0
\end{equation}\label{delta}
for all $p\in U_{\delta}(0)$ and $y\in W_{\gamma}(0)$.

Now we establish an expansion for $\Delta(\mu,p;z)$ in the
half-neighborhood $(m(p)-\varepsilon,m(p))$ of the point $z=m(p)$,
which plays an important role in the proof of the main results.
\begin{lemma}\label{det_expansion}
Assume Hypothesis \ref{Hyp1}. Then for any sufficiently small
$m(p)-z>0$ the function $\Delta(\mu,p; \cdot),\,\mu>0,\,p\in
U_{\delta}(0)$ can be represented as the following convergent
series:
\begin{itemize}
\item[(i)] if $\varphi(q_0(p))\neq 0$, then
\begin{align*}\label{30}\Delta(\mu,p;z)=1-\mu \alpha_0(p)\ln(m(p)-z)-
\mu\ln(m(p)-z)\\ \nonumber
\times\sum\limits_{n=1}^{\infty}\alpha_n(p)\left({m(p)-z}\right)^n-
\mu F(p,z),\end{align*} where $\ln(\cdot)$ is the branch of function
$\mathrm{Ln}$ assuming the real values for $m(p)-z>0$,
$\alpha_0(p)=-\frac{1}{2}\varphi^2(q_{0}(p))J(q_{0}(p))$, the
coefficients $\alpha_1(p), \alpha_2(p),...$ are real numbers and

\begin{equation*}\label{31}
F(\mu,z)=\sum\limits_{n=0}^{\infty}c_n(p)\left({m(p)-z}\right)^{n},\end{equation*}
with real coefficients $c_0(p), c_1(p), c_2(p),...$.

\item[(ii)] if $\varphi(q_0(p))=0$ and
$\nabla\varphi(q_0(p))=\left(\dfrac{\partial\varphi}{\partial
q_1}(q_0(p), \dfrac{\partial\varphi}{\partial
q_2}(q_0(p)\right)\neq0$, then
\begin{equation}\label{delta}\Delta(\mu,p;z)=1-\frac{\mu}{\mu(p)}-\mu
\ln(m(p)-z)\sum\limits_{n=1}
^{\infty}\hat{\alpha}_n(p)\left({m(p)-z}\right)^n- \mu
\hat{F}(p,z),\end{equation} where
\begin{align}\label{qqq1}\hat{\alpha}_1(p)=\frac\pi
2J(q_0(p)) \left\{\left[\dfrac{\partial\varphi}{\partial
q_1}(q_0(p))\dfrac{\partial\psi_1}{\partial
y_1}(0,p)+\dfrac{\partial\varphi}{\partial
q_2}(q_0(p))\dfrac{\partial\psi_2}{\partial
y_1}(0,p)\right]^2\right.\\ \nonumber+
\left.\left[\dfrac{\partial\varphi}{\partial
q_1}(q_0(p))\dfrac{\partial\psi_1}{\partial
y_2}(0,p)+\dfrac{\partial\varphi}{\partial
q_2}(q_0(p))\dfrac{\partial\psi_2}{\partial
y_2}(0,p)\right]^2\right\}>0,\end{align} $\hat{\alpha}_n(p),
n=2,3,...$ are real numbers and
\begin{equation*}\label{ww}
\hat{F}(p,z)=\sum\limits_{n=1}^{\infty}\hat{c}_n(p)\left({m(p)-z}\right)^{n}
\end{equation*}
with real coefficients $\hat{c}_n(p), n=1,2,...$. \end{itemize}
\end{lemma}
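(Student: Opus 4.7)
The plan is to work directly from the identity $\Delta(\mu,p;z)=1-\mu\Omega(p;z)$ with $\Omega$ given by \eqref{omega}, writing $u=m(p)-z>0$ and analyzing $\Omega$ in a half-neighborhood of $z=m(p)$. First I would split
$$\Omega(p;z)=\int_{U(q_0(p))}\frac{\varphi^2(s)\,ds}{w_p(s)-z}+\int_{\T^2\setminus U(q_0(p))}\frac{\varphi^2(s)\,ds}{w_p(s)-z},$$
where $U(q_0(p))=\psi(W_\gamma(0),p)$ is the Morse chart supplied before the lemma. On the complement, $w_p(s)-m(p)$ has a positive lower bound, so the integrand is jointly holomorphic in $(s,z)$ for $z$ near $m(p)$; this contribution is an honest Taylor series in $u$, producing the analytic tails $F(p,z)$ and the non-logarithmic pieces of $\hat F(p,z)$.

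The heart of the argument is the local piece. I would change variables via $s=\psi(y,p)$ with $w_p(\psi(y,p))=m(p)+y^2$ (parametric Morse lemma) and then pass to polar coordinates $y=(r\cos\theta,r\sin\theta)$, yielding
$$\Omega_{\rm loc}(p;z)=\int_0^\gamma\!\!\int_0^{2\pi}\frac{g(r\cos\theta,r\sin\theta,p)\,r\,d\theta\,dr}{r^2+u},\qquad g(y,p):=\varphi^2(\psi(y,p))J(\psi(y,p)).$$
Because $\varphi$, $w$, $\psi$, and hence $g$, are real-analytic on a compact set, I can expand $g(y,p)=\sum_{i,j\ge0}a_{ij}(p)y_1^i y_2^j$ absolutely and uniformly on $W_\gamma(0)$. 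The parity identities $\int_0^{2\pi}\cos^i\theta\sin^j\theta\,d\theta=0$ whenever $i$ or $j$ is odd collapse the sum to even-even monomials, so only even powers $r^{2k}$ survive after the $\theta$-integration.

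The radial reduction is then driven by the elementary identity, proved by iterating $r^{2k}=(r^2+u)r^{2k-2}-u\,r^{2k-2}$,
$$\int_0^\gamma\frac{r^{2k+1}\,dr}{r^2+u}=\sum_{j=0}^{k-1}\frac{(-u)^j\gamma^{2(k-j)}}{2(k-j)}+\frac{(-u)^k}{2}\bigl[\ln(\gamma^2+u)-\ln u\bigr].$$
Substituting and reorganizing gives $\Omega_{\rm loc}(p;z)=\ln(m(p)-z)\,\mathcal A(p,u)+\mathcal B(p,u)$ with $\mathcal A,\mathcal B$ convergent power series in $u$; combined with the regular part from Step 1 this reproduces the structure asserted in the lemma. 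Reading off leading terms: in case (i), $g(0,p)=\varphi^2(q_0(p))J(q_0(p))\neq0$, so the $k=0$ contribution is nonzero and yields $\alpha_0(p)=-\tfrac12\varphi^2(q_0(p))J(q_0(p))$; in case (ii), $g(0,p)=0$ but $\varphi(\psi(y,p))=r\bigl(A(p)\cos\theta+B(p)\sin\theta\bigr)+O(r^2)$ with $A,B$ the bracketed linear combinations of \eqref{qqq1}, so the $k=0$ term disappears and the $k=1$ angular average $\int_0^{2\pi}(A\cos\theta+B\sin\theta)^2\,d\theta=\pi(A^2+B^2)$ together with the factor $\tfrac12$ produces $\hat\alpha_1(p)=\tfrac{\pi}{2}J(q_0(p))(A^2+B^2)>0$, matching \eqref{qqq1}. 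The constant term in case (ii) is fixed by evaluating $\Omega(p;m(p))$: since $\varphi(q_0(p))=0$ renders the integral convergent, Definition \ref{definition} gives $\Omega(p;m(p))=\mu(p)^{-1}$, which accounts for the $1-\mu/\mu(p)$ in \eqref{delta}.

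The step I expect to be most delicate is not any single coefficient computation but rather the uniform convergence of the two-parameter expansion: one must show that the iterated substitution of the convergent Taylor series for $g$ into the radial formula yields series in $u$ (and ultimately in $u$ and $u\ln u$) whose coefficients decay fast enough to sum absolutely on a common half-disk in $u$. This is a routine but careful estimation using the real-analyticity provided by Hypothesis \ref{Hyp1} and the positivity of $J$ in \eqref{Jacobian}; both ensure that the radii of convergence in $y$ and $u$ can be chosen independently of $p$ on the compact set $\overline{U_{\delta/2}(0)}$, which is what is needed for the expansions to be genuinely absolutely convergent rather than merely formal.
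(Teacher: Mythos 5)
Your proposal is correct and follows essentially the same route as the paper: split $\Omega$ into the contribution near $q_0(p)$ and the analytic remainder, apply the parametric Morse change of variables, Taylor-expand the density, pass to polar coordinates, kill odd monomials by parity, and reduce to the radial identity $\int_0^\gamma r^{2k+1}(r^2+u)^{-1}dr=\tfrac12(-u)^k\ln u^{-1}+(\text{analytic})$, with the constant term fixed by $\Omega(p;m(p))=\mu(p)^{-1}$. The only cosmetic differences are that you expand the composed density $g=(\varphi^2\circ\psi)\,(J\circ\psi)$ directly in $y$ and obtain $\hat\alpha_1(p)>0$ immediately from $(A,B)=D\psi(0,p)^{T}\nabla\varphi(q_0(p))\neq0$ (invertibility of the Jacobian), where the paper instead expands $\varphi^2$ in the original variables and proves $\hat\alpha_1(p)\neq0$ by contradiction from the system \eqref{system}.
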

\begin{remark}
We remark that, if in the part (ii) of Lemma \ref{det_expansion} we
assume that  $\varphi(q_0(p))=0$ and $\nabla\varphi(q_0(p))=0$, all
results of Lemma \ref{det_expansion} are hold, except the inequality
$\hat{\alpha}_1(p)>0$.
\end{remark}

\begin{proof}[Proof of Lemma \ref{det_expansion}]
The part (i) of Lemma \ref{det_expansion} is proven as Lemma 3.6 in
\cite{LAK2012}.  Therefore, we  prove here only the part (ii).

 We represent the function \eqref{omega} in the form
\begin{equation}\label{form}{\Omega}(p,z)={\Omega}_1(p,z)+{\Omega}_2(p,z),\end{equation}
where
\begin{gather} \label{first}{\Omega}_1(p,z)
=\int\limits_{U(q_0(p))}\dfrac{\varphi^2(s)ds}{w_p(s) -z},\quad
{\Omega}_2(p,z)=\int\limits_{\T^2\setminus U(q_0(p))}
\dfrac{\varphi^2(s)ds}{w_p(s) - z},
\end{gather}
and $U(q_0(p))\in \T^2$ is a sufficiently small neighborhood of the
minimum point $q_0(p)\in \T^2$.

Since $m(p)$ is a unique minimum of the function $w_p(s)$ one
concludes that for any $p\in U_{\delta}(0)$ the function
${\Omega}_2(p,z)$ is analytic in some neighborhood of the point
$z=m(p)$.

Taylor's series expansion of the function $\varphi(q)$ in the
neighborhood $U(q_0(p))$ of $q=q_0(p)=(q_1^{0}(p), q_2^{0}(p))$ can
be written as
\begin{align*}\nonumber\varphi^2(q)=a_1^2(p)(q_1-q_1^{0}(p))^2
+a_2^2(p)(q_2-q_2^{0}(p))^2+2a_1(p)
a_2(p)(q_1-q_1^{0}(p))(q_2-q_2^{0}(p))\\+\sum\limits_{n=3}^{\infty}\sum\limits_{\,i_1,i_2,..,i_n=1}^{2}
a_{i_1i_2..i_n}(p)\prod\limits_{k=1}^{n}(q_{i_k}-q^{0}_{i_k}(p)),\end{align*}
where $a_1(p)=\dfrac{\partial \varphi}{\partial q_1}(q_0(p)),\,
a_2(p)=\dfrac{\partial \varphi}{\partial q_2}(q_0(p))$\,and
$a_{i_1i_2..i_n}(p),\,i_1,i_2,...,i_n=1,2,...$ are real numbers.
Hence and by \eqref{first} we have

\be \label{m}\Omega_1(p,z)=\Omega_{11}(p,z)+
\Omega_{12}(p,z)+\Omega_{13}(p,z)+\Omega_{14}(p,z),\ee where
\begin{equation}\label{11}
\Omega_{11}(p,z)=a_1^2(p)\int\limits_{U(q_0(p))}\dfrac{(s_1-q_1^0(p))^2ds}{w_p(s)
-z},\,\,\,
\Omega_{12}(p,z)=a_2^2(p)\int\limits_{U(q_0(p))}\dfrac{(s_2-q_2^0(p))^2ds}{w_p(s)
-z},\end{equation} \begin{equation}\nonumber
\Omega_{13}(p,z)=2a_1(p)a_2(p)\int\limits_{U(q_0(p))}\dfrac{(s_1-q_1^0(p))(s_2-q_2^0(p))ds}{w_p(s)
-z},\end{equation} \begin{equation}\label{3.}
\Omega_{14}(p,z)=\sum\limits_{n=3}^{\infty}\sum\limits_{\,i_1,i_2,..,i_n=1}^{2}
a_{i_1i_2..i_n}(p)\int\limits_{U(q_0(p))}\prod\limits_{k=1}^{n}\frac{(s_{i_k}-q^{0}_{i_k}(p))}{w_p(s)
-z}ds.\end{equation}

Making a change of variables $s=\psi(y,p)$, in the first integral of
\eqref{11}, gives
\begin{equation}\label{omega11}\Omega_{11}(p,z)=a_1^2(p)\int\limits_{W_{\gamma}(0)}\dfrac{(\psi_1(y,p)-q_1^0(p))^2
J(\psi(y,p))dy}{y^2+m(p)-z}.\end{equation} The regularity of the
functions $\psi(y,p)$ and $J(\psi(y,p))$ in $W_{\gamma}(0)$ yields
the following expansions
$$\psi_1(y,p)=q_1^0(p)+b_1(p)y_1+b_2(p)y_2+\sum\limits_{k,l\in N, k+l=4}^{\infty}
b_{kl}(p)y_1^{k-1}y_2^{l-1},$$
$$J(\psi(y,p))=J(q_0(p))+\sum\limits_{k,l\in N,
k+l=3}^{\infty}d_{kl}(p)y_1^{k-1}y_2^{l-1},  $$ and consequently
\begin{align}\label{Omega1}
(\psi_1(y,p)-q_1^0(p))^2J(\psi(y,p))=b_1^2(p)J(q_0(p))y_1^2+b_2^2(p)J(q_0(p))y_2^2
\\ \nonumber+2b_1(p)b_2(p)J(q_0(p))y_1y_2+\sum\limits_{k,l\in N,
k+l=5}^{\infty}g_{kl}(p)y_1^{k-1}y_2^{l-1},\end{align} where
$$b_1(p)=\frac{\partial\psi_1}{\partial y_1}(0,p),\quad
b_2(p)=\frac{\partial\psi_1}{\partial y_2}(0,p),$$ and $g_{k
l}(p),\,\, k,l\in N$ are real numbers. By \eqref{omega11}, for
 $\Omega_{11}(p,z)$ we get the representation
\begin{equation}\label{Omega2}\Omega_{11}(p,z)=\Omega_{11}^{(1)}(p,z)+\Omega_{11}^{(2)}(p,z)+\Omega_{11}^{(3)}(p,z)
+\Omega_{11}^{(4)}(p,z),
\end{equation} where
\begin{align}\label{Omega3}&\Omega_{11}^{(1)}(p,z)=a_1^2(p)b_1^2(p)J(q_0(p))
\int\limits_{W_{\gamma}(0)}\dfrac{y_1^2dy}{y^2+m(p)-z},\\
&\Omega_{11}^{(2)}(p,z)=a_1^2(p)b_2^2(p)J(q_0(p))
\int\limits_{W_{\gamma}(0)}\dfrac{y_2^2dy}{y^2+m(p)-z},\nonumber\\
&\Omega_{11}^{(3)}(p,z)=2
a_1^2(p)b_1(p)b_2(p)J(q_0(p))\int\limits_{W_{\gamma}(0)}\dfrac{y_1y_2dy}{y^2+m(p)-z}\nonumber
\end{align}
and
\begin{equation}\label{Omega}
\Omega_{11}^{(4)}(p,z)=\sum\limits_{k,l\in N,
k+l=5}^{\infty}g_{kl}(p)
\int\limits_{W_{\gamma}(0)}\dfrac{y_1^{k-1}y_2^{l-1}dy}{y^2+m(p)-z}.
\end{equation}
Passing the polar coordinates as  $y_1=r\cos\alpha,$
$y_2=r\sin\alpha,$ $0\leq r\leq \gamma,$
 $0\leq\alpha\leq2\pi$ in \eqref{Omega3} yields
\begin{equation}\label{Omega4}\Omega_{11}^{(1)}(p,z)=\pi a_1^2(p)b_1^2(p)J(q_0(p))
\int\limits_0^\gamma\dfrac{r^3dr}{r^2+m(p)-z}.\end{equation} Recall
that for any $\zeta<0$ and $n\in N$ the following equality
\begin{equation}\label{propos} I_{n}(\zeta) =\int\limits_{0}^\delta \frac{r^{2n+1}dr}{r^2
-\zeta}= -\dfrac12\zeta^n\ln(-\zeta)+\hat{I}_n(\zeta)
\end{equation} holds, where $\hat{I}_n(\zeta)$ is a regular function in
some neighborhood of the origin \cite[Lemma 5]{Lakaev92}. Hence and
\eqref{propos} yields
\be\label{propos1}\Omega_{11}^{(1)}(p,z)=\frac{\pi}{2}
a_1^2(p)b_1^2(p)J(q_0(p))
(m(p)-z)\ln(m(p)-z)+\sum\limits_{n=0}^\infty \xi_n(p)(m(p)-z)^n,\ee
where  $\xi_n(p),n=1,2,...$ are real numbers. Analogously, for
$\Omega_{11}^{(2)}(p,z)$, we have the expansion
\begin{equation}\label{7}\Omega_{11}^{(2)}(p,z)=\frac{\pi}{2}
a_1^2(p)b_2^2(p)J(q_0(p))
(m(p)-z)\ln(m(p)-z)+\sum\limits_{n=0}^\infty
\eta_n(p)(m(p)-z)^n\end{equation} and the equality
$\Omega_{11}^{(3)}(p,z)=0.$ Passing to the polar coordinates as
$y_1=r\cos\alpha,$ $y_2=r\sin\alpha,$ $0\leq r\leq \gamma,$
 $0\leq\alpha\leq2\pi$  in \eqref{Omega}, we get
\begin{equation*}\label{q11}\Omega_{11}^{(4)}(p,z)=\sum\limits_{k,l\in N,
k+l=5}^{\infty}g_{kl}(p)
\int\limits_0^{\gamma}\dfrac{r^{k+l-1}dr}{r^2+m(p)-z}\int\limits_0^{2\pi}
\cos^{k-1}\alpha\sin^{l-1}\alpha d\alpha.\end{equation*} Note that,
the integral
\begin{equation*}\label{cos}\int\limits_0^{2\pi}
\cos^{k-1}\alpha\sin^{l-1}\alpha d\alpha\end{equation*} is not equal
to zero in case  $k=2n_1+1$, $n_1\in N$ and  $l=2n_2+1,$ $n_2\in N$
and is equal to zero in case $k=2n_1,\,n_1\in N$ or \,
$l=2n_2,\,n_2\in N.$ According to this the function
$\Omega_{11}^{(4)}(p,z)$  is represented as
$$\Omega_{11}^{(4)}(p,z)=\sum\limits_{n=2}^{\infty}\hat{g}_{n}(p)
\int\limits_0^{\gamma}\dfrac{r^{2n+1}dr}{r^2+m(p)-z}.$$ The equality
\eqref{propos} yields
\begin{equation}\label{9}\Omega_{11}^{(4)}(p,z)=\ln(m(p)-z)\sum\limits_{n=2}^{\infty}q_{n}(p)(m(p)-z)^n+
\sum\limits_{n=0}^\infty\theta_n(p)(m(p)-z)^n,\end{equation} where
$q_{n}(p)$ and $\theta_n(p)$ are real numbers. Taking into account
\eqref{propos1}, \eqref{7}, \eqref{9} and \eqref{Omega2} we have the
following expansion
$$\Omega_{11}(p,z)=\ln(m(p)-z)\sum\limits_{n=1}^\infty
d_n(p)(m(p)-z)^n+\sum\limits_{n=0}^\infty \hat{d}_n(p)(m(p)-z)^n,$$
where $$d_1(p)=\frac{\pi}{2}a_1^2(p)J(q_0(p))(b_1^2(p)+b_2^2(p)).$$
Analogously it is found the expansions for functions
$\Omega_{12}(p,z)$ and $\Omega_{13}(p,z),$ i.e.
$$ \Omega_{12}(p,z)=\ln(m(p)-z)\sum\limits_{n=1}^\infty
e_n(p)(m(p)-z)^n+\sum\limits_{n=0}^\infty \hat{e}_n(p)(m(p)-z)^n,$$
$$\Omega_{13}(p,z)=\ln(m(p)-z)\sum\limits_{n=1}^\infty
f_n(p)(m(p)-z)^n+\sum\limits_{n=0}^\infty \hat{f}_n(p)(m(p)-z)^n.$$
Here $e_n(p),$ $\hat{e}_n(p),$ $f_n(p)$ and $\hat{f}_n(p)$ are real
numbers with
$$ e_1(p)=\frac{\pi}{2}a_2^2(p)J(q_0(p))(l_1^2(p)+l_2^2(p)),$$ $$
f_1(p)=\pi a_1(p)a_2(p)J(q_0(p))(b_1(p)l_1(p)+b_2(p)l_2(p)),$$ where
$l_1(p)=\dfrac{\partial\psi_2}{\partial y_1}(0,p)$ and
$l_2(p)=\dfrac{\partial\psi_2}{\partial y_2}(0,p)$.

Making a change of variables $s=\psi(y,p)$ in \eqref{3.}  we find
$$\Omega_{14}(p,z)=\sum\limits_{n=3}^{\infty}\sum\limits_{\,i_1,i_2,..,i_n=1}^{2}
a_{i_1i_2..i_n}(p)\int\limits_{W_{\gamma}(0)}\prod\limits_{k=1}^{n}
\frac{(\psi_{i_k}(y,p)-q^{0}_{i_k}(p))J(\psi(y,p))}{y^2+m(p)
-z}dy.$$ Expanding the function
$(\psi_{i_k}(y,p)-q^{0}_{i_k}(p))J(\psi(y,p))$ at the point $y=0$,
as in  \eqref{Omega1}, from the last equation we obtain
$$\Omega_{14}(p,z)=\sum\limits_{k,l\in N,
k+l=5}^{\infty}{\hat{a}}_{kl}(p)\int\limits_{W_{\gamma}(0)}
\frac{y_1^{k-1}y_2^{l-1}}{y^2+m(p) -z}dy,$$
${\hat{a}}_{kl}(p),\,k,l\in N$ are real numbers. Hence, similarly as
above we get the following expansion

$$\Omega_{14}(p,z)=\ln(m(p)-z)\sum\limits_{n=2}^{\infty}g_{n}(p)(m(p)-z)^n+
\sum\limits_{n=0}^\infty \hat{g}_n(p)(m(p)-z)^n,$$  where ${g}_n(p)
$ and  $\hat{g}_n(p) $ are real numbers

The expansions of functions $\Omega_{11}(p,z),$ $\Omega_{12}(p,z),$
$\Omega_{13}(p,z),$ $\Omega_{14}(p,z)$ and the equalities
\eqref{form}, \eqref{m} give
\begin{equation}\label{w}\Omega(p,z)=\ln(m(p)-z)\sum\limits_{n=1}^{\infty}\hat{\alpha}_n(p)(m(p)-z)^n+
\sum\limits_{n=0}^\infty\hat{c}_n(p)(m(p)-z)^n,\end{equation}
$$\hat{\alpha}_1(p)=\frac\pi 2J(q_0(p))
\big[(a_1(p)b_1(p)+a_2(p)l_1(p))^2+(a_1(p)b_2(p)+a_2(p)l_2(p))^2\big].$$
Now we proof that $\hat{\alpha}_1(p)\neq 0.$ Assume the converse,
i.e.,
\begin{equation}\label{system}
\begin{cases}a_1(p)b_1(p)+a_2(p)l_1(p)=0\\
a_1(p)b_2(p)+a_2(p)l_2(p)=0.\end{cases}
\end{equation} Since,
$a_1(p)=\dfrac{\partial \varphi}{\partial q_1}(q_0(p))$ and
$a_2(p)=\dfrac{\partial \varphi}{\partial q_2}(q_0(p))$ by the
assumption (ii) of Lemma \ref{det_expansion}, at least one of these
two numbers $a_1(p)$ or $a_2(p)$ is not zero. Assume $a_1(p)\neq0$.
Then, multiplying the first equation of the system \eqref{system} by
$l_2(p)$, the second one by $l_1(p)$ and also subscribing them term
by term we obtain the equality $$b_1(p)l_2(p)=b_2(p)l_1(p),$$ which
contradicts to the inequality \eqref{Jacobian}. Thus
$\hat{\alpha}_1(p)>0$. Passing to the limits as  $z\to m(p)$ in
\eqref{w}, we get $\hat{c}_0(p)=1/\mu(p)$, which completes the prove
of Lemma \ref{det_expansion}.
\end{proof}
The part (ii) of Lemma \ref{det_expansion} yields
\begin{corollary}If $\varphi(q_0(p))=\nabla\varphi(q_0(p))=0$,
then $\hat{\alpha}_1(p)=0.$
\end{corollary}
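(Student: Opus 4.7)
The plan is to recycle the entire computation that produced formula \eqref{qqq1} in the proof of Lemma \ref{det_expansion}(ii), without redoing any integrals. Recall that in that proof the coefficient of $(m(p)-z)\ln(m(p)-z)$ in $\Omega(p,z)$ was assembled from the contributions of $\Omega_{11}(p,z)$, $\Omega_{12}(p,z)$, and $\Omega_{13}(p,z)$ only, because $\Omega_{14}(p,z)$ uses cubic or higher monomials in $(s-q_0(p))$, which after the Morse change of variables and polar integration are shown to contribute only to $(m(p)-z)^n\ln(m(p)-z)$ with $n\ge 2$. As noted in the remark immediately following Lemma \ref{det_expansion}, dropping the hypothesis $\nabla\varphi(q_0(p))\neq 0$ does not affect any step of that derivation except the sign of $\hat{\alpha}_1(p)$; the expansion \eqref{delta} is still valid and $\hat{\alpha}_1(p)$ is still given by \eqref{qqq1}.

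First I would invoke this closed form \eqref{qqq1}, namely
\begin{equation*}
\hat{\alpha}_1(p)=\tfrac{\pi}{2}J(q_0(p))\bigl\{\bigl[a_1(p)b_1(p)+a_2(p)l_1(p)\bigr]^2+\bigl[a_1(p)b_2(p)+a_2(p)l_2(p)\bigr]^2\bigr\},
\end{equation*}
where $a_j(p)=\partial\varphi/\partial q_j(q_0(p))$ and $b_j(p),l_j(p)$ are the partial derivatives of the Morse coordinate map $\psi$ evaluated at $y=0$. Next I would observe that the hypothesis $\nabla\varphi(q_0(p))=0$ is precisely the statement $a_1(p)=a_2(p)=0$. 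Substituting these values into the displayed formula causes each bracket to vanish, and therefore $\hat{\alpha}_1(p)=0$.

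The main (and only) subtlety is to make sure that the expansion of $\Omega_{14}(p,z)$ really does start at order $(m(p)-z)^2\ln(m(p)-z)$ rather than at the dangerous order $(m(p)-z)\ln(m(p)-z)$; but this was already established inside the proof of Lemma \ref{det_expansion}(ii) by the parity argument on $\int_0^{2\pi}\cos^{k-1}\alpha\sin^{l-1}\alpha\,d\alpha$ (nonzero only when both $k$ and $l$ are odd, so $k+l\ge 6$) combined with \eqref{propos}, and that bound is independent of whether $\nabla\varphi(q_0(p))$ vanishes or not. Thus no additional obstacle appears and the corollary follows in a single line from the formula for $\hat{\alpha}_1(p)$.
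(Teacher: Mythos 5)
Your proof is correct and is essentially the paper's own argument: the paper derives the corollary directly from part (ii) of Lemma \ref{det_expansion} (together with the preceding remark that the expansion and formula \eqref{qqq1} persist when $\nabla\varphi(q_0(p))=0$), so that setting $a_1(p)=a_2(p)=0$ in \eqref{qqq1} gives $\hat{\alpha}_1(p)=0$. Your additional check that $\Omega_{14}(p,z)$ only contributes at order $(m(p)-z)^n\ln(m(p)-z)$ with $n\ge 2$ is exactly the parity argument already in the lemma's proof, so nothing new is needed.
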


\begin{lemma}\label{koef}Assume Hypothesis \ref{Hyp1}, $p\in U_{\delta}(0)$ and $\varphi(q_0(p))=0$.
Then for coefficients $\hat{\alpha}_1(p),$ $\hat{c}_1(p)$ of the
expansion \eqref{delta} the following relation holds:
$$|\hat{\alpha}_1(p)|+|\hat{c}_1(p)|\neq 0.$$
\end{lemma}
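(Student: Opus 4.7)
The plan is to split into two cases according to whether $\nabla\varphi(q_0(p))$ vanishes, since the preceding corollary already tells us that the case distinction exactly mirrors whether $\hat{\alpha}_1(p)$ vanishes. In the first case, $\varphi(q_0(p))=0$ but $\nabla\varphi(q_0(p))\neq 0$, part (ii) of Lemma \ref{det_expansion} gives $\hat{\alpha}_1(p)>0$ directly, and there is nothing more to prove. So the real work is the remaining case $\varphi(q_0(p))=\nabla\varphi(q_0(p))=0$, in which the corollary forces $\hat{\alpha}_1(p)=0$, and the task reduces to showing $\hat{c}_1(p)\neq 0$.

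For that case my strategy is to identify $\hat{c}_1(p)$ with an explicit integral whose sign is obvious. From the expansion in \eqref{w}, with $\hat{\alpha}_1(p)=0$ and $\hat{c}_0(p)=1/\mu(p)$, one has, writing $u=m(p)-z>0$,
\begin{equation*}
\Omega(p,m(p)-u)=\frac{1}{\mu(p)}+\hat{c}_1(p)\,u+O(u^2|\log u|),\qquad u\to 0^+,
\end{equation*}
so $\hat{c}_1(p)$ equals the right derivative of $u\mapsto \Omega(p,m(p)-u)$ at $u=0$. Differentiating the integral representation \eqref{omega} under the integral sign (justified by dominated convergence, see below) gives
\begin{equation*}
\hat{c}_1(p)=-\int_{\T^2}\frac{\varphi^2(s)\,ds}{(w_p(s)-m(p))^2}.
\end{equation*}

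The convergence of this integral is the one real thing that has to be checked, and it is where the assumption $\varphi(q_0(p))=\nabla\varphi(q_0(p))=0$ is used crucially. Since $w_p$ has a non-degenerate minimum at $q_0(p)$, one has $w_p(s)-m(p)\asymp |s-q_0(p)|^2$ near $q_0(p)$, so $(w_p(s)-m(p))^2\asymp|s-q_0(p)|^4$. Since $\varphi$ vanishes at $q_0(p)$ together with its first derivatives, Taylor's formula yields $\varphi^2(s)=O(|s-q_0(p)|^4)$ near $q_0(p)$. Thus the integrand is bounded in a neighbourhood of $q_0(p)$, and away from $q_0(p)$ the denominator is bounded below, so the integral is finite.

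Finally, nontriviality follows from Hypothesis \ref{Hyp1}(i): a real-analytic, not identically zero function $\varphi$ on $\T^2$ is nonzero on a set of positive measure, so the integrand is strictly positive on such a set and the integral is strictly positive. Hence $\hat{c}_1(p)<0$, which in particular gives $\hat{c}_1(p)\neq 0$ and therefore $|\hat{\alpha}_1(p)|+|\hat{c}_1(p)|\neq 0$ in this case as well. The main obstacle is the clean identification of $\hat{c}_1(p)$ with the displayed integral; once the uniqueness of the expansion in \eqref{w} is combined with the dominated-convergence argument for differentiating $\Omega(p,z)$ at the endpoint $z=m(p)$, the rest of the argument is purely a size estimate.
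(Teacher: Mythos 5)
Your proof is correct, but it takes a genuinely different route from the paper. The paper argues by contradiction: assuming $\hat{\alpha}_1(p)=\hat{c}_1(p)=0$, it substitutes the eigenvalue into the determinant equation \eqref{delta}, invokes the concavity of $\mu\mapsto E(\mu,p)$ (citing \cite{Bareket:1981}) to get a finite one\nobreakdash-sided derivative $m(p)-E(\mu,p)=C(\mu-\mu(p))+o(\mu-\mu(p))$, and then lets $\mu\to\mu(p)$ to force the absurd identity $1/\mu(p)=0$. You instead split on whether $\nabla\varphi(q_0(p))$ vanishes: when it does not, Lemma \ref{det_expansion}(ii) already gives $\hat{\alpha}_1(p)>0$; when it does, you identify $\hat{c}_1(p)$ directly from \eqref{omega} and the uniqueness of the asymptotic expansion \eqref{w} as
\begin{equation*}
\hat{c}_1(p)=-\int_{\T^2}\frac{\varphi^2(s)\,ds}{(w_p(s)-m(p))^2}<0,
\end{equation*}
the convergence of the integral being exactly where $\varphi(q_0(p))=\nabla\varphi(q_0(p))=0$ enters ($\varphi^2(s)=O(|s-q_0(p)|^4)$ against $(w_p(s)-m(p))^2\asymp|s-q_0(p)|^4$), and strict positivity of the integral coming from real-analyticity of the nontrivial $\varphi$. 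Your monotone/dominated-convergence justification of the difference-quotient limit is sound since the limiting integrand dominates the quotients. What each approach buys: the paper's contradiction argument treats both subcases at once but imports an external convexity theorem and only yields the qualitative conclusion, after which it still has to read off $\hat{c}_1(p)<0$ from \eqref{ass}; your computation is self-contained, avoids the concavity input entirely, and delivers the explicit formula and sign of $\hat{c}_1(p)$ that the proof of Theorem \ref{main}(iii) needs anyway (the positivity of $a(p)=[-\hat{c}_1(p)\mu^2(p)]^{-1}$ is immediate from it).
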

\begin{proof}[Proof] Assume the converse that is
$\hat{\alpha}_1(p)=\hat{c}_1(p)=0.$ Then Lemma \ref{egenvalue} and
the equality \eqref{delta} yields
\begin{align}\label{c}
-\frac{\mu-\mu(p)}{\mu(p)}-\mu \ln(m(p)-E(\mu,p))\sum\limits_{n=2}
^{\infty}\hat{\alpha}_n(p)\left({m(p)-E(\mu,p)}\right)^n\\
\nonumber-\mu
\sum\limits_{n=2}^{\infty}\hat{c}_n(p)\left({m(p)-E(\mu,p)}\right)^n=0.\end{align}
For each $p\in \T^2$ the eigenvalue $E(\mu,p)$ is concave function
of $\mu\ge0$ (see \cite[Theorem 1]{Bareket:1981}). Since every
concave function on $\mathbb{R}$ has a finite right derivatives the
finite limit
\begin{equation}\nonumber
\lim\limits_{\mu\to \mu(p)^+} \dfrac{E(\mu,p)-m(p)}{ \mu - \mu(p)}
\end{equation}
 exists. Hence we get
$$m(p)-E(\mu,p)=C(\mu-\mu(p))+o(\mu-\mu(p)),\quad \mu\to\mu(p),\quad 0\leq C<\infty. $$
Consequently, using \eqref{c} we have
\begin{align}\nonumber\label{c}-\mu
\ln[C(\mu-\mu(p))+o(\mu-\mu(p))]\sum\limits_{n=2}
^{\infty}\hat{\alpha}_n(p)\left[C(\mu-\mu(p))+o(\mu-\mu(p))\right]^{n-1}\\
\nonumber-\mu \sum\limits_{n=2}^{\infty}\hat{c}_n(p)
\left[C(\mu-\mu(p))+o(\mu-\mu(p))\right]^{n-1}=\frac{1}{\mu(p)},\,
as\,\, \mu\to\mu(p).\end{align} Passing to the limits as
$\mu\to\mu(p)$ in both sides of the last equation we have
$1/{\mu(p)}=0.$ This contradiction show that
 $|\hat{\alpha}_1(p)|+|\hat{c}_1(p)|\neq0.$
\end{proof}

Now we are able to prove the main results.

\begin{proof}[\bf Proof of Theorem \ref{main}]
Set $\mu(p,z)=(\Omega(p,z))^{-1}>0$, $p\in U_{\delta}(0),\,
z\in(-\infty,m(p))$. The function
 $\mu(p,\cdot):$ $z\in (-\infty,
m(p)) \mapsto \mu \in (\mu(p),+\infty)$ is continuous and monotone
decreasing in $z\in (-\infty, m(p))$. Then
$$\lim\limits_{z\to m(p)-0} \mu(p,z)= \mu(p)\ge0.$$
Therefore $\mu(p,z)$ has a continuous inverse $E(\cdot,p)$: $\mu \in
(\mu(p),+\infty)\mapsto z\in (-\infty, m(p))$. Clearly, $\Delta(\mu,
p\ ; E(\mu,p))\equiv 0.$ The above arguments will lead to a logical
conclusion  proving that
 $E(\cdot,p)\to m(p)-0,$ if and only if
$\mu\to \mu(p)+0.$

 Using appropriate changes of variables, one can reduce the proofs of parts (i) and
(ii) to the proof of the implicit function theorem for several
variables (see, e.g., \cite{{LKhLtmf12}},\cite{LDK2013}).

Therefore, we prove part (iii) of Theorem \ref{main}.

Denote by $\hat{\mu}=\mu-\mu(p)$ and $\alpha=m(p)-E(p,z)$, where
$\mu(p)>0$.

\item[(iii)] Let $\varphi(q_0(p))=0$ and $\nabla \varphi(q_0(p))=0.$
Then Lemma \ref{det_expansion} implies $\hat{\alpha}_1(p)=0$ and
Lemma \ref{koef} gives
 $\hat{c}_1(p)\neq0.$ Hence Lemmas \ref{det_expansion} and
 \ref{egenvalue}
yield the equation
\begin{equation}\label{ass}-\frac{\hat{\mu}}{\hat{\mu}\mu(p)+\mu^2(p)}=
\ln\alpha\sum\limits_{n\geq 2}^{\infty}\hat{\alpha}_n(p){\alpha}^n+
\sum\limits_{n\geq 1}^{\infty}\hat{c}_n(p){\alpha}^n,\end{equation}
from which one can see that $\hat{c}_1(p)<0.$

 Introducing now the variables
\begin{equation*}\label{gg}\alpha=\hat{\mu}\,(a(p)+u),\quad
a(p)=[-\hat{c}_1(p)\,\mu^2(p)]^{-1},\end{equation*} yields that in
the region, where $|\alpha|$ is small, the equation \eqref{ass} is
equivalent to
\begin{align}\label{ppp}\nonumber F(u,\tau, \hat{\mu})=
\left[ \tau+\hat{\mu}\ln(a(p)+u)\right] \times\sum\limits_{n\geq
2}\hat{\alpha}_n(p)\hat{\mu}^{n-2}(a(p)+u)^{n}\\+\sum\limits_{n\ge
1}\hat{c}_n(p)\hat{\mu}^{n-1}(a(p)+u)^{n}+\frac{1}{\hat{\mu}\mu(p)+\mu^2(p)}=0\end{align}
with $\tau=\hat{\mu}\ln\hat{\mu}.$ The function $F$ satisfies the \
following conditions:
\item[(i)] $u=0,$
  $\tau=0,$ $\hat{\mu}=0$ is a solution of
  $F(\cdot,\cdot,\cdot)=0$
\item[(ii)] $F$ is
analytic function for small $|u|$,  $|\tau|,$  $|\hat{\mu}|;$
\item[(iii)]
$\partial F/\partial u (0,0,0)=\hat{c}_1(p)\ne 0.$

The implicit function theorem yields that for all sufficiently small
$\tau$ and $\hat{\mu}$, equation \eqref{ppp} has a unique solution
$u(\hat{\mu},\tau)$, which is given by the absolutely convergent
series
$$u=\sum\limits_{l, s\ge0} c(l, s)\, \tau^{l}\,\hat{\mu}^{s}.$$

The condition $\tau=\hat{\mu}=0$ gives the equality ${c}(0,0)=0.$
Given \eqref{gg}, we have
\begin{align}\nonumber \alpha = a(p)\hat{\mu} + \sum\limits_{l,s\ge
0, l+s\geq1} c(l,s)\,\tau^{l}\,\hat{\mu}^{s+1}=a(p)\hat{\mu} +
\sum\limits_{l\geq0,s\ge1, l+s\geq2}
\hat{c}(l,s)\,\tau^{l}\,\hat{\mu}^{s},\end{align} which yields
\eqref{ox}.
\end{proof}
{\bf Acknowledgments} This research was supported by the Foundation
for Basic Research of the Republic of Uzbe\-kistan (Grant
No.OT-F4-66).

\end{document}